
\documentclass[12pt]{amsart}

\usepackage{amsmath}
\usepackage{amsfonts}
\usepackage{amssymb}
\usepackage{amsthm}
\usepackage{graphics}
\usepackage{amscd}
\usepackage{graphicx,epsfig}
\usepackage{color}
\usepackage[all]{xy}
\usepackage{url}
\usepackage[margin = 1in] {geometry}




\DeclareMathOperator{\tr}{tr}
\DeclareMathOperator{\Div}{div}


\renewcommand{\epsilon}{\varepsilon}

\newcommand{\boM}{\mathcal{M}}

\newcommand{\boA}{\mathcal{A}}

\newcommand{\boB}{\mathcal{B}}

\newcommand{\boN}{\mathcal{N}}

\newcommand{\R}{\mathbb{R}}

\newcommand{\B}{\mathbb{B}}

\renewcommand{\S}{\mathbb{S}}

\newcommand{\eps}{\varepsilon}

\newcommand{\Ome}{\Omega}

\newcommand{\barre}[1]{\overline{#1}}
\renewcommand{\phi}{\varphi}

\newtheorem{Thm}{Theorem}[section]
\newtheorem{Prop}{Proposition}[section]

\newtheorem{Cor}{Corollary}[section]

\newtheorem*{thm*}{Theorem}

\theoremstyle{remark}
\newtheorem{rmk}{Remark}

\newcounter{remark}

\newcounter{case}

\newcounter{construction}

\newcounter{fact}

\DeclareMathOperator{\bric}{BRic}
\DeclareMathOperator{\ric}{Ric}
\DeclareMathOperator{\vol}{Vol}

\title{Stable minimal hypersurfaces in $\R^6$}
\author{Laurent Mazet}
\address{Institut Denis Poisson, CNRS UMR 7013, Universit\'e de Tours, 
Universit\'e d'Orl\'eans, Parc de Grandmont, 37200 Tours, France}
\email{laurent.mazet@univ-tours.fr}

\begin{document}

\begin{abstract}
Following the strategy developed by Chodosh, Li, Minter and Stryker, and 
using the volume estimate of Antonelli and Xu, we prove that, in $\R^6$, a 
complete, two-sided, stable minimal hypersurfaces is flat.
\end{abstract}

\maketitle

\section{introduction}

A minimal hypersurface $M^n$ of $\R^{n+1}$ is a critical point of the 
$n$-volume 
functional. It is characterized by its vanishing mean curvature. If a unit 
normal vectorfield $\nu$ is defined along $M$ and $\phi$ is a function with 
compact support on $M$, one can consider a deformation of $M$ with initial 
speed $\phi\nu$. The computation of the second derivative of the $n$-volume 
along this deformation at initial time gives
\[
\int_M|\nabla\phi|^2-|A_M|^2\phi^2
\] 
where $A_M$ is the second fundamental form of $M$. So asking that this quantity 
is non negative for any $\phi$ means that $M$ is a minimum at order $2$ of the 
$n$-volume. Such a minimal hypersurface is called stable.

The stable Bernstein problem asks wether a complete stable minimal hypersurface 
is a flat affine hyperplane. We give a positive answer in the case $n=5$.

\begin{Thm}\label{th:main}
Let $M^5\looparrowright \R^6$ be an immersed, complete, connected, two-sided, 
stable 
minimal hypersurface. Then $M$ is a Euclidean hyperplane.
\end{Thm}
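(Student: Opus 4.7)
The plan is to carry out the Chodosh--Li--Minter--Stryker (CLMS) scheme, which settled the analogous question at $n=4$, in dimension $n=5$, with the Euclidean volume bound of Antonelli--Xu as the decisive new input. Assume, for contradiction, that $M^5\looparrowright\R^6$ is stable, complete, two-sided, and not a hyperplane. The first step is to record the Antonelli--Xu estimate
\[
\mathcal{H}^5(M\cap B_R(0))\le C R^5
\]
for every $R>0$ with a universal constant $C$. This is the ingredient that was unavailable in dimension $n=5$ before: it simultaneously supplies the a priori control needed to absorb cutoff errors in the stability inequality and the area bound required for compactness of blow-downs.

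Next I would implement a warped $\mu$-bubble inside $M$ as in CLMS. Fix a warping function on $M$ built from an extrinsic distance with a weight adapted to the ambient stability, and let $\Sigma\subset M$ be a minimizer of the associated bubble functional. The first variation prescribes the mean curvature of $\Sigma$; its second variation, combined with the stability of $M$ tested against a suitable function supported near $\Sigma$, yields an integral inequality coupling $|A_M|^2$ on $\Sigma$ to the intrinsic geometry of $\Sigma$. Iterating the construction, or coupling it with a conformal change, should reduce matters to a strictly lower-dimensional problem handled by Fischer--Colbrie--Schoen at $n=2$ or by Chodosh--Li at $n=3$. The output I expect is a scale-invariant curvature estimate $|A_M|(x)\le C/(1+|x|)$. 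Combined with the volume bound, the rescalings $\lambda M$ then subconverge smoothly on compact sets of $\R^6\setminus\{0\}$, as $\lambda\downarrow 0$, to a stable minimal cone $\mathcal{C}\subset\R^6$; by Simons' theorem $\mathcal{C}$ is a hyperplane, and a uniqueness-of-tangent-cone/graphical argument using the curvature decay then forces $M$ itself to be flat.

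The main obstacle is the scale-invariant curvature estimate in the $\mu$-bubble step. At $n=5$ the constants arising in Simons' identity for $|A|^2$ and in the second variation of the bubble functional are borderline, and the precise $R^5$ volume growth supplied by Antonelli--Xu is exactly what is needed so that the boundary errors introduced by cutoffs can be absorbed and the CLMS-style integral estimates close as they do at $n=4$. Without such a volume bound these boundary terms cannot be discarded; this is why $n=5$ was precisely the case just outside the reach of the earlier technology, and also why adding one single external ingredient — the Antonelli--Xu estimate — is expected to complete the argument.
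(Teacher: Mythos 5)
Your plan inverts the logic of the argument at its first step, and this is a genuine gap.  You begin by ``recording'' an Antonelli--Xu estimate $\mathcal{H}^5(M\cap B_R(0))\le CR^5$.  But the Antonelli--Xu result is not a free a priori volume bound for stable minimal hypersurfaces; it is a spectral Bishop--Gromov theorem, valid for a Riemannian manifold whose Ricci curvature satisfies a spectral lower bound of the form $-c\Delta+\lambda\ge\delta$.  The induced metric $g$ on $M$ has no such bound, so you cannot apply it to $M$, and certainly not at the outset.  In the paper, the Euclidean volume growth of $M$ is the \emph{conclusion} of the argument, not its starting point, and Antonelli--Xu enters only at the very end, applied not to $M$ but to the $\mu$-bubble boundary $\Sigma=\partial\Omega_*$, which has been carefully constructed (in the Gulliver--Lawson conformal metric $\tilde g=r^{-2}g$) precisely so that it satisfies the required spectral Ricci lower bound.

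The machinery you would need to fill this gap is the heart of the paper and does not appear in your proposal: (i) pass to the conformal metric $\tilde g=r^{-2}g$ on $N=M\setminus F^{-1}(0)$; (ii) rewrite stability as a spectral lower bound for a weighted bi-Ricci curvature $\widetilde{\bric}_\alpha$ of $\tilde g$ (Theorem~\ref{th:stable_estim}); (iii) run a warped $\mu$-bubble construction in $(N,\tilde g)$ so that the second variation converts the bi-Ricci spectral bound into a spectral Ricci lower bound on the bubble $\Sigma$ (Theorem~\ref{th:mububble}); (iv) apply Antonelli--Xu to $\Sigma$ in the $\tilde g$-metric, scale back to get $\vol(\Sigma)\lesssim\rho^4$, and then use the Michael--Simon/Brendle isoperimetric inequality to obtain $\vol_g(\mathcal B_\rho)\lesssim\rho^5$; and finally (v) invoke Schoen--Simon--Yau.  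Your subsequent plan --- a scale-invariant curvature estimate $|A_M|\lesssim 1/(1+|x|)$ followed by blow-down to a cone and Simons' theorem --- is also not the route taken (and would require the volume bound you have not established); once Euclidean volume growth is in hand, Schoen--Simon--Yau concludes directly.
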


A particular class of stable minimal hypersurface is given by minimal graphs 
over $\R^n$. In \cite{Ber}, Bernstein proved that a minimal graph over $\R^2$ 
has 
to be a plane. In the sixties, the same question for higher dimensions was 
studied 
in a series 
of paper by Fleming 
\cite{Fle}, De Giorgi \cite{DeG}, Almgren \cite{Alm2} and Simons \cite{Sims}. 
They proved that minimal graphs over $\R^n$ are planes if $n\le 7$. For $n\ge 
8$, Bombieri, De Giorgi and Giusti \cite{BoGiGi} were able to construct 
counter-examples and gave also in $\R^8$ an example of a stable minimal 
hypersurfaces that is not a hyperplane.

Concerning the stable Bernstein problem, the question was solved positively in 
$\R^3$ by Do Carmo and Peng \cite{DoPe}, Fischer-Colbrie and Schoen \cite{FCSc} 
and Porogelov \cite{Pog} in the early eighties. In higher dimension, Schoen, 
Simon and Yau \cite{ScSiYa,ScSi} were able to settle the stable Bernstein in 
$\R^{n+1}$, $n\le 6$, under a Euclidean volume growth assumption (see also the 
recent work of Bellettini \cite{Bel}).

Recently Chodosh and Li \cite{ChoLi3} were able to answer positively the 
stable Bernstein problem in $\R^4$. Later two alternative proofs came out: one 
by Catino, Mastrolia and Roncoroni \cite{CaMaRo} and one by Chodosh and Li 
\cite{ChoLi}. Actually in \cite{ChoLi}, Chodosh and Li develop a second 
strategy to prove the result. Then, in a joint work Minter and Stryker 
\cite{ChLiMiSt}, they were able to apply this strategy in the case of $\R^5$ to 
solved  
the stable Bernstein problem in this dimension as well.

As in \cite{ChLiSt,ChLiMiSt}, it is well known that Theorem~\ref{th:main} comes 
with 
corollaries like curvature estimates for stable minimal immersions in 
$6$-dimensional manifolds and characterization of finite Morse index minimal 
hypersurfaces in $\R^6$. For example, we have
\begin{Cor}
Let $(X^6,g)$ be a complete Riemannian manifold whose sectional curvature 
satisfies $|sec_g|\le K$. Then any compact, two-sided, stable minimal immersion 
$M^5\looparrowright X$ satisfies
\[
|A_M|(q)\min(1,d_M(q,\partial M))\le C(K)
\]
for $q\in M$.
\end{Cor}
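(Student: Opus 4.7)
The plan is to argue by contradiction using a standard point-picking and blow-up procedure, reducing the local estimate to the global Bernstein-type result of Theorem~\ref{th:main}. Suppose the conclusion fails: then for some $K>0$, there exist Riemannian $6$-manifolds $(X_k,g_k)$ with $|sec_{g_k}|\le K$, compact two-sided stable minimal immersions $M_k^5\looparrowright X_k$, and points $q_k\in M_k$ with
$|A_{M_k}|(q_k)\,\min(1,d_{M_k}(q_k,\partial M_k))\to\infty$.

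My first step is a classical point-picking (as used, e.g., in \cite{ChLiMiSt}): I replace $q_k$ by a nearby point $p_k\in M_k$ and choose a radius $r_k\le\min(1,d_{M_k}(p_k,\partial M_k))/2$ such that, writing $\lambda_k:=|A_{M_k}|(p_k)$, one has $\lambda_k r_k\to\infty$ and $|A_{M_k}|\le 2\lambda_k$ on the intrinsic geodesic ball $B_{M_k}(p_k,r_k)$. I then blow up the ambient geometry at $p_k$ by the factor $\lambda_k$, working in exponential coordinates at $p_k$. Since $|sec_{g_k}|\le K$ and $\lambda_k\to\infty$, the rescaled ambient metrics converge smoothly on compact sets to the Euclidean metric on $\R^6$, and the rescaled immersions $\widetilde M_k$ are two-sided stable minimal hypersurfaces satisfying $|A_{\widetilde M_k}|(p_k)=1$ and $|A_{\widetilde M_k}|\le 2$ on intrinsic balls of radius $\lambda_k r_k\to\infty$.

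With uniformly bounded second fundamental forms on exhausting balls, the standard smooth compactness theorem for minimal immersions (combined with uniform $C^{1,\alpha}$ graphical estimates) produces, up to subsequence, a smooth subsequential limit. Taking the connected component through the base point, I obtain a complete, connected, two-sided, stable minimal immersion $\widetilde M_\infty\looparrowright\R^6$ with $|A_{\widetilde M_\infty}|(0)=1$. Theorem~\ref{th:main} then forces $\widetilde M_\infty$ to be a hyperplane, contradicting $|A_{\widetilde M_\infty}|(0)=1$.

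I expect the only real technical point to be checking that the hypotheses of the main theorem pass to the limit. Completeness is ensured by the choice $\lambda_k r_k\to\infty$. Two-sidedness transfers by extracting a smooth limit of the unit normals $\nu_k$ on compact sets. Stability survives because the ambient stability inequality, which on $M_k$ involves the extra term $\mathrm{Ric}_{X_k}(\nu_k,\nu_k)\,\phi^2$, rescales by $\lambda_k^{-2}$ and thus drops out in the limit, leaving precisely the Euclidean stability inequality. Once these three checks are in place, applying Theorem~\ref{th:main} to $\widetilde M_\infty$ yields the contradiction and establishes the corollary.
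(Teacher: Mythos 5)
Your blow-up argument is correct and is precisely the standard argument that the paper is implicitly invoking by citing \cite{ChLiSt,ChLiMiSt} rather than writing out a proof. The point-picking, rescaling by $\lambda_k=|A_{M_k}|(p_k)$ so the ambient curvature scales to zero, the observation that the ambient term $\mathrm{Ric}_{X_k}(\nu_k,\nu_k)$ in the stability inequality disappears after rescaling, and the final application of Theorem~\ref{th:main} to the complete Euclidean limit with $|A|(0)=1$ are exactly the expected steps.
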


The basic idea to prove Theorem~\ref{th:main} is to obtain a Euclidean growth 
estimate for  the volume of $M$ and then apply the work of Schoen, Simon and 
Yau. The strategy of Chodosh and Li is 
a way towards this estimate. We refer to \cite{ChoLi,ChLiMiSt} for a good 
presentation of their ideas. Let us give some elements. Let $M$ be a stable 
minimal hypersurface in $\R^{n+1}$ with induced metric $g$. Inspired by the 
work of Gulliver and Lawson \cite{GuLa}, they consider the conformal metric 
$\tilde g=r^{-2}g$ where $r$ is the Euclidean distance to $0$ in $\R^{n+1}$. If 
$M$ was a hyperplane passing through the origin $(M\setminus \{0\},\tilde g)$ 
would be isometric to the Euclidean product $\S^{n-1}\times \R$. In the general 
case, the idea of Chodosh and Li is that the stability assumption implies that 
the geometry of $(M\setminus \{0\},\tilde g)$ should look like $\S^{n-1}\times 
\R$. 
In \cite{ChLiMiSt}, the authors consider the bi-Ricci curvature which is a 
certain combination of sectional curvatures. It was introduced by Shen and Ye in
\cite{ShYe}, already to study minimal surfaces (see precise definition in 
Section~\ref{sec:prelim}). Notice that on $\S^{n-1}\times 
\R$, the bi-Ricci curvature is lower bounded by $n-2$. In \cite{ChLiMiSt}, the 
authors prove that the stability of $M$ implies a positive spectral lower bound 
for 
the 
bi-Ricci curvature of $(M\setminus \{0\},\tilde 
g)$. More precisely they prove that, on $(M\setminus \{0\},\tilde g)$, the 
operator $-\widetilde\Delta+(\widetilde\bric_--1)$ is non-negative where 
$\widetilde\bric_-$ is the punctual minimum of the bi-Ricci curvature of 
$\tilde g$. This should be understood as a weak form of $\widetilde\bric \ge 1$.

The second step of the strategy consists in the construction of a $\mu$-bubble 
in $(M\setminus \{0\},\tilde g)$ with a spectral lower bound for its Ricci 
curvature. In some sense, they identify in any sufficiently large part of 
$(M\setminus 
\{0\},\tilde g)$ a hypersurface that play the role of $\S^{n-1}\times\{t\}$ in 
$\S^{n-1}\times 
\R$.

The last step in \cite{ChLiMiSt} is to obtain a Bishop-Gromov volume estimate 
for the $\mu$-bubble under the spectral lower bound on the Ricci curvature. In 
their paper, 
the proof of this volume estimate was specific to dimension $3$. Recently, 
Antonelli and Xu \cite{AnXu} have proved such a Bishop-Gromov estimate in any 
dimension.

Once the $\tilde g$-volume of the $\mu$-bubble is controlled, this gives an 
estimate of its volume in the original metric $g$ and then control the growth 
of the volume of $M$ tanks to an isoperimetric inequality due to Michael and 
Simon \cite{MiSi} and Brendle \cite{Bre2}.

In the present paper, we also follow the above strategy of \cite{ChLiMiSt}. 
Here we 
consider a weighted bi-Ricci curvature $\bric_\alpha$ where the parameter 
$\alpha$ does not give the same weight to all sectional curvature in the 
combination (a similar idea appear in the recent article by Hong and Yan 
\cite{HoYa}).  We prove a spectral lower bound for the weighted bi-Ricci 
curvature: the operator $-a\widetilde 
\Delta+(\widetilde\bric_{\alpha-}-\delta)$ is 
non-negative where $a,\delta\in \R$. At that step, $a,\alpha$ are two 
parameters that should be chosen such that $\delta>0$.

By imposing some new constraints on $a$ and $\alpha$, we are then able to 
construct the $\mu$-bubble with a spectral lower bound on the Ricci curvature. 
At the last step, we apply the Bishop-Gromov estimate of Antonelli and Xu 
\cite{AnXu}. In order to do so, this imposes some new constraints on the 
parameters $a$ and $\alpha$. Nevertheless, the choice $a=\frac{11}{10}$ and 
$\alpha=\frac{40}{43}$ fits all the constraints. The end of the proof then 
follows the line of \cite{ChLiMiSt}.

\subsection*{Organization}
In Section~\ref{sec:prelim}, we fix some notations that we use all along the 
paper. Section~\ref{sec:specbiric} is devoted to the proof the spectral lower 
bound for $\widetilde\bric_\alpha$ for the Gulliver-Lawson metric on a stable 
minimal hypersurface. In Section~\ref{sec:mububble}, we construct the 
$\mu$-bubble with a spectral lower Ricci bound. We end the proof of 
Theorem~\ref{th:main} in Section~\ref{sec:proof}. Along the paper, we specify 
the value of $n$, $a$ and $\alpha$ only when it is necessary, we hope this 
allows 
to understand where the constraints come from.

\subsection*{Acknowledgments} The author was partially supported by the 
ANR-19-CE40-0014 grant. Part of this work was carried out during a stay at 
the Instituto 	de Matem\'aticas de la Unversidad de Granada (IMAG), the 
author would to thank its members for their hospitality.

\section{Preliminaries}\label{sec:prelim}

Let $(M^n,g)$ be a Riemannian manifold and $(e_i)_{1\le i\le n}$ be an 
orthonormal basis of $T_pM$. For $\alpha\in \R$, we recall or define
\begin{itemize}
\item the Ricci curvature $\ric(e_1,e_1)=\sum_{i=2}^nR(e_1,e_i,e_i,e_1)$,
\item the punctual minimum of the Ricci curvature $\lambda(p) =\min_{v\in 
T_pM,|e|=1}\ric(e,e)$,
\item the weighted bi-Ricci or $\alpha$-bi-Ricci curvature 
\[
\bric_\alpha(e_1,e_2)=\sum_{i=2}^nR(e_1,e_i,e_i,e_1)+\alpha\sum_{j=3}^n 
R(e_2,e_j,e_j,e_2)
\]
\item the minimum of the $\alpha$-bi-Ricci curvature 
$\Lambda_\alpha(p)=\min_{(e,f)\text{ orthonormal in }T_pM}\bric_\alpha(e,f)$
\end{itemize}
Notice that for $\alpha=1$, $\bric_1$ is the classical bi-Ricci curvature 
as defined in \cite{ShYe}.

If $\Sigma\looparrowright M$ is a hypersurface with unit normal $\nu$. We use 
the following conventions:
\begin{itemize}
\item the second fundamental form of $\Sigma$ is 
$A_\Sigma(X,Y)=(\nabla_X\nu,Y)=-(\nabla_XY,\nu)$ and
\item the mean curvature of $\Sigma$ is $H=\tr A_\Sigma$.
\end{itemize}

If $\Ome$ is a subset of $M$, we denote by $\boN_{\rho}(\Ome)$ the 
$\rho$-tubular neighborhood of $\Ome$: the set of 
points at distance less than $\rho$ from $\Ome$.

We finish by a simple remark that we use in Section~\ref{sec:specbiric}.
\begin{rmk}\label{rmk:quadratic}
Let $A\in \boM_n(\R)$ be a positive definite symmetric matrix and $B\in 
\R^n$. Then the function $f: X\in \R^n\mapsto X^\top A X+B^\top X\in\R$ 
is 
lower bounded and its minimum is given by $-\frac14 B^\top A^{-1}B$.
\end{rmk}

\section{Spectral lower bound for the weighted bi-Ricci curvature}
\label{sec:specbiric}

Let $F:M^n\looparrowright \R^{n+1}$ be a complete two-sided minimal 
hypersurface and $g$ 
its induced metric. We consider the Gulliver-Lawson conformal metric $\tilde 
g=r^{-2}g$ 
where $r$ is the Euclidean distance function to $0$. Notice that if $F(p)=0$, 
$\tilde g$ is not defined. So we consider $N=M\setminus F^{-1}(0)$. As it was 
observed by Gulliver and Lawson \cite{GuLa}, the metric $(N,\tilde g)$ is 
complete.

The first step of the proof of Theorem~\ref{th:main} consists in proving that 
the stability assumption 
can be translated in a spectral lower bound for the $\alpha$-bi-Ricci curvature 
of the metric $\tilde g$. Actually we have the following result.

\begin{Thm}\label{th:stable_estim}
Let $M^n\looparrowright \R^{n+1}$ be a two-sided stable minimal hypersurface.
Suppose $n=5$, then, for $a=\frac{11}{10}$, $\alpha=\frac{40}{43}$ and 
$\delta=\frac3{10}$, 
there is a smooth function $V$ such that 
\[
V\ge \delta-\widetilde\Lambda_\alpha
\]
and 
\begin{equation}\label{eq:specest}
\int_N |\nabla\phi|_{\tilde g}^2 dv_{\tilde g}\ge \int_N \frac1a V\phi^2 
dv_{\tilde 
g}
\end{equation}
for any $\phi\in C_c^1(N)$.
\end{Thm}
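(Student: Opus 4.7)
The plan is to test the stability inequality $\int_M |A_M|_g^2 f^2\,dv_g \le \int_M |\nabla f|_g^2\,dv_g$ against $f = r^{\beta}\phi$ for $\phi\in C_c^1(N)$ and an exponent $\beta$ ultimately tuned by $a$, $\alpha$, $n$. Using the minimality identity $\Delta_g r = (n-|\nabla r|_g^2)/r$ (a consequence of $\Delta_g|F|^2 = 2n$) and integration by parts in the cross term, one obtains a spectral inequality of the shape
$$
\int_N \bigl(|A_M|_g^2\, r^2 - \beta(2-\beta)|\nabla r|_g^2 + n\beta\bigr)\phi^2\,dv_{\tilde g}
\;\le\; \int_N|\nabla\phi|_{\tilde g}^2\,dv_{\tilde g}
$$
after the choice $\beta = (2-n)/2$ aligns the $g$- and $\tilde g$-measures. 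The parameter $a$ later rescales this relation in order to buy slack for the pointwise estimate below.

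Next, at each $p\in N$, the Gauss equation for a minimal hypersurface gives, for any orthonormal pair $(e_1,e_2)\in T_pM$,
$$
\bric_\alpha(e_1,e_2) \;=\; -|Ae_1|^2 - \alpha|Ae_2|^2 + \alpha\bigl(A(e_1,e_2)^2 - A(e_1,e_1)A(e_2,e_2)\bigr),
$$
with $|Ae_i|^2 := \sum_j A_M(e_i,e_j)^2$, so $|A_M|_g^2 = \sum_i |Ae_i|^2$ splits as a multiple of $-\bric_\alpha(e_1,e_2)$ plus an explicit polynomial in the entries of $A_M$ in the frame $(e_i)$. Passing from $\bric_\alpha$ to $\widetilde\bric_\alpha$ via the conformal-change formula under $\tilde g = r^{-2}g$ introduces a correction involving $\Hess_g r$ and $|\nabla r|_g^2$. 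On $M$ the Hessian of $r$ decomposes as
$$
\nabla^2 r(X,Y) = \tfrac{1}{r}\bigl(g(X,Y) - \langle\nabla r,X\rangle\langle\nabla r,Y\rangle\bigr) - u\,A_M(X,Y),
$$
where $u$ is the normal component of the ambient gradient of $r$ and $u^2+|\nabla r|_g^2=1$; hence the conformal correction is itself polynomial in the entries of $A_M$, weighted by $u$ and $|\nabla r|_g$.

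Putting these ingredients together reduces the theorem to a pointwise algebraic inequality at every $p\in N$ and for every orthonormal pair $(e_1,e_2)$, of the form $X^\top Q X + L^\top X \ge 0$ where $X$ is the vector of entries of $A_M$ in the chosen frame and where $Q, L$ depend only on $(a,\alpha,\beta,u,|\nabla r|_g,n,\delta)$. When $Q$ is positive definite, Remark~\ref{rmk:quadratic} provides the sharp uniform bound $-\tfrac14 L^\top Q^{-1}L$, and the question reduces to a numerical check in the remaining parameters. The triple $a=\tfrac{11}{10}$, $\alpha=\tfrac{40}{43}$, $\delta=\tfrac{3}{10}$ is then precisely what keeps $Q\succ 0$ uniformly in $u\in[0,1]$ and makes the resulting bound nonnegative. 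The required smooth $V$ emerges directly as the algebraic expression $a\bigl(|A_M|_g^2 r^2 - \beta(2-\beta)|\nabla r|_g^2 + n\beta\bigr)$ in the smooth functions $|A_M|_g$, $|\nabla r|_g$, $r$, so no separate regularization is needed.

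The hard part is the final algebraic balance. The parameters $(a,\alpha,\delta)$ are simultaneously constrained by the spectral inequality here and by the later $\mu$-bubble construction of Section~\ref{sec:mububble} and by the Antonelli--Xu volume bound; there is very little room, and it is the narrowness of that room that both forces the announced numerical values and singles out $n=5$ as the borderline dimension to which this strategy still applies.
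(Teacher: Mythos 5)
Your plan follows the paper's own line of attack: the conformally rewritten stability inequality obtained by testing stability against $r^{(2-n)/2}\phi$ (the paper just cites \cite[Prop.~3.10]{ChLiMiSt} for this identity, equation~\eqref{eq:stab}), combined with the Gauss identity for $\bric_\alpha$ (Prop.~\ref{prop:biric}), the conformal change of curvature (Prop.~\ref{prop:confbiric}), and a completion-of-squares step via Remark~\ref{rmk:quadratic} (Prop.~\ref{prop:secondform}), finishing with a numerical check. One correction: the quadratic block $Q$ arising here depends only on $(a,\alpha,n)$, not on $u$ or $|\nabla r|_g$ --- those enter only through the linear part $L$ and the constants, so the condition to check is not ``$Q\succ0$ uniformly in $u$'' but the explicit list $a>\tfrac12$, $2a\ge\alpha$, $W>0$ for the $2\times2$ diagonal block (after peeling off the off-diagonal entries of $A$, which is where $2a\ge\alpha$ is used), plus $\alpha\le1$ to absorb the $dr(e_1)^2$, $dr(e_2)^2$ terms into $|dr|^2$, and finally the endpoint verification $C(0),C(1)\ge\delta$ for the remaining affine function of $|dr|^2\in[0,1]$.
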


\subsection{Recalling some computations}

We first recall some computations and results of \cite{ChLiMiSt}. 

We denote by $\nu$ the unit normal to $M$ and by $|dr|$ the norm of the 
differential of $r$ along $M$ with respect to the $g$-metric.

Let 
$(e_i)_{1\le i\le n}$ be an orthonormal basis for the metric $g$ then, for the 
conformal metric $\tilde g$, an orthonormal basis is given by $\tilde 
e_i=re_i$. The sectional curvatures of $g$ and $\tilde g$ are related by
\begin{equation}\label{eq:sectional}
\tilde{R}_{ijji}=r^2R_{ijji}+2-|dr|^2-dr(e_i)^2-dr(e_j)^2-(p,\nu)(A_{ii}+A_{jj})
\end{equation}
(see \cite[Proposition~3.5]{ChLiMiSt}).

The second result that we want to recall is the writing of the stability 
inequality in the conformal metric $\tilde g$. We have
\begin{equation}\label{eq:stab}
\int_N |\nabla \phi|^2 dv_{\tilde g}\ge 
\int_N\left(r^2|A|^2-\frac{n(n-2)}2+\frac{n^2-4}4|dr|^2\right)\psi^2dv_{\tilde 
g}
\end{equation}
for any $\phi\in C_c^1(N)$ (see \cite[Proposition~3.10]{ChLiMiSt}).

\subsection{Estimating the curvature terms}
In this subsection, we want to relate the curvature term in the stability 
inequality \eqref{eq:stab} to the $\alpha$-bi-Ricci curvature. We use the 
notations of the preceding subsection.
\begin{Prop}\label{prop:confbiric}
\begin{align*}
\widetilde{\bric}_\alpha(\tilde e_1,\tilde 
e_2)=&r^2\bric_\alpha(e_1,e_2)+2(n-1+\alpha(n-2))-(n+\alpha(n-1))|dr|^2\\
&-((n-2-\alpha)dr(e_1)^2+\alpha(n-3)dr(e_2)^2)\\
&-(p,\nu)((n-2-\alpha)A_{11}+\alpha(n-3)A_{22})
\end{align*}
\end{Prop}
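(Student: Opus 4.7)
The plan is to compute $\widetilde{\bric}_\alpha(\tilde e_1,\tilde e_2)$ directly from the definition, by applying the conformal sectional curvature identity \eqref{eq:sectional} to each term in the two sums and then collecting. Throughout, I will use that $M$ is minimal, so $\sum_{i=1}^n A_{ii}=0$, and the elementary identity $\sum_{i=1}^n dr(e_i)^2=|dr|^2$.

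First, handle the Ricci-like sum $\sum_{i=2}^n \tilde R_{1ii1}$. Applying \eqref{eq:sectional} to each of the $n-1$ summands and pulling out a factor of $r^2$ from the first piece gives $r^2\ric(e_1,e_1)$. The constant terms produce $2(n-1)$. The $|dr|^2$ terms give $-(n-1)|dr|^2$, while the $dr(e_i)^2$ contributions add $-(n-1)dr(e_1)^2-\sum_{i=2}^n dr(e_i)^2=-(n-1)dr(e_1)^2-(|dr|^2-dr(e_1)^2)$. Finally, the $(p,\nu)$ terms contribute $-(p,\nu)\bigl((n-1)A_{11}+\sum_{i=2}^nA_{ii}\bigr)=-(n-2)(p,\nu)A_{11}$ by minimality. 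Collecting:
\[
\sum_{i=2}^n \tilde R_{1ii1}=r^2\ric(e_1,e_1)+2(n-1)-n|dr|^2-(n-2)dr(e_1)^2-(n-2)(p,\nu)A_{11}.
\]

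Next, do the analogous computation for $\sum_{j=3}^n \tilde R_{2jj2}$, which has $n-2$ summands and must be done carefully because the index $j=1$ is omitted. The curvature piece yields $r^2\bigl(\ric(e_2,e_2)-R(e_2,e_1,e_1,e_2)\bigr)$, which is precisely the quantity that combines with $\ric(e_1,e_1)$ above (weighted by $\alpha$) to form $r^2\bric_\alpha(e_1,e_2)$. The $|dr|^2$ and $dr(e_i)^2$ bookkeeping now uses $\sum_{j=3}^n dr(e_j)^2=|dr|^2-dr(e_1)^2-dr(e_2)^2$, and the second fundamental form terms use $\sum_{j=3}^nA_{jj}=-A_{11}-A_{22}$. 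A short simplification produces
\[
\sum_{j=3}^n \tilde R_{2jj2}=r^2(\ric(e_2,e_2)-R_{1221})+2(n-2)-(n-1)|dr|^2+dr(e_1)^2-(n-3)dr(e_2)^2+(p,\nu)A_{11}-(n-3)(p,\nu)A_{22}.
\]

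Finally, form $\widetilde{\bric}_\alpha(\tilde e_1,\tilde e_2)$ by adding the first sum and $\alpha$ times the second. The curvature contributions collapse to $r^2\bric_\alpha(e_1,e_2)$ by definition. The constants add to $2(n-1+\alpha(n-2))$; the $|dr|^2$ coefficients combine to $-(n+\alpha(n-1))$; the $dr(e_1)^2$ coefficients become $-(n-2)+\alpha=-(n-2-\alpha)$ and the $dr(e_2)^2$ coefficient is $-\alpha(n-3)$; and the $(p,\nu)A_{11}$ coefficient is $-(n-2)+\alpha=-(n-2-\alpha)$ with $A_{22}$ coefficient $-\alpha(n-3)$. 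This matches the stated formula.

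There is no conceptual obstacle—this is a direct application of \eqref{eq:sectional} coupled with the minimality trace condition. The only delicate point is the asymmetry caused by the sum $\sum_{j=3}^n$ skipping the index $j=1$, so the bookkeeping for the $\alpha$-part has to account for the extra subtracted $R_{1221}$, $dr(e_1)^2$, and $A_{11}$ terms; this is what produces the $+\alpha$ corrections to the $(n-2)$ coefficients of $dr(e_1)^2$ and $(p,\nu)A_{11}$.
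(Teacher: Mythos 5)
Your proof is correct and follows essentially the same route as the paper: apply \eqref{eq:sectional} term‑by‑term to the two sums defining $\widetilde\bric_\alpha$, simplify the normal‑derivative terms via $\sum_i dr(e_i)^2=|dr|^2$, and use minimality ($\tr A=0$) to reduce $\sum_{i\ge 2}A_{ii}$ and $\sum_{j\ge 3}A_{jj}$; the only difference is purely organizational (you state the two partial sums separately before combining, while the paper carries them together). The bookkeeping, including the $+\alpha$ corrections from the index $j=1$ being absent in the second sum, matches the paper's computation exactly.
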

\begin{proof}
Summing \eqref{eq:sectional} and using $\tr A=0$, we have
\begin{align*}
\widetilde{\bric}_\alpha(\tilde e_1,\tilde 
e_2)=&\sum_{i=2}^n\tilde{R}_{1ii1}+\alpha\sum_{j=3}^n\tilde{R}_{2jj2}\\
=&r^2\bric_\alpha(e_1,e_2)+ 2(n-1)-(n-1)|dr|^2-(n-1)dr(e_1)^2\\
&-(|dr|^2-dr(e_1)^2)-(p,\nu)((n-1)A_{11}-A_{11})+2\alpha(n-2)\\
&-\alpha(n-2)|dr|^2-\alpha(n-2)dr(e_2)^2-\alpha(|dr|^2-dr(e_1)^2-dr(e_2)^2)\\
&-\alpha(p,\nu)((n-2)A_{22}-A_{11}-A_{22})\\
=&r^2\bric_\alpha(e_1,e_2)+2(n-1+\alpha(n-2))-(n+\alpha(n-1))|dr|^2\\
&-((n-2-\alpha)dr(e_1)^2+\alpha(n-3)dr(e_2)^2)\\
&-(p,\nu)((n-2-\alpha)A_{11}+\alpha(n-3)A_{22})
\end{align*}
\end{proof}

\begin{Prop}\label{prop:biric}
\[
\bric_\alpha(e_1,e_2)=-\sum_{i=1}^nA_{1i}^2-\alpha\sum_{j=2}^nA_{2j}^2-\alpha 
A_{11}A_{22}
\]
\end{Prop}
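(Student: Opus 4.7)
The identity is a direct consequence of the Gauss equation for a hypersurface in Euclidean space together with minimality, so the plan is essentially a one-step computation broken into two pieces.

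First, I would recall the Gauss equation: since $M \looparrowright \R^{n+1}$ is a hypersurface in flat space, for any pair $i \ne j$ in the orthonormal basis $(e_k)$ we have
\[
R_{ijji} = A_{ii}A_{jj} - A_{ij}^2.
\]
Applying this to the first sum in the definition of $\bric_\alpha$ gives
\[
\sum_{i=2}^n R(e_1,e_i,e_i,e_1) = A_{11}\sum_{i=2}^n A_{ii} - \sum_{i=2}^n A_{1i}^2 = A_{11}(H - A_{11}) - \sum_{i=2}^n A_{1i}^2.
\]
Using minimality, $H = \tr A = 0$, this collapses to $-\sum_{i=1}^n A_{1i}^2$.

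Next, applying the Gauss equation to the second sum gives
\[
\sum_{j=3}^n R(e_2,e_j,e_j,e_2) = A_{22}\sum_{j=3}^n A_{jj} - \sum_{j=3}^n A_{2j}^2 = A_{22}(H - A_{11} - A_{22}) - \sum_{j=3}^n A_{2j}^2,
\]
which by minimality equals $-A_{11}A_{22} - A_{22}^2 - \sum_{j=3}^n A_{2j}^2 = -A_{11}A_{22} - \sum_{j=2}^n A_{2j}^2$.

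Adding the first sum and $\alpha$ times the second yields the claimed formula. There is no real obstacle here; the only thing to be careful about is the bookkeeping on the index ranges so that the diagonal term $A_{11}A_{22}$ picks up the factor $\alpha$ exactly once, arising from the $j=2$ case absorbed via minimality in the second sum.
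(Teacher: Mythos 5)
Your proof is correct and takes essentially the same route as the paper: apply the Gauss equation $R_{ijji} = A_{ii}A_{jj} - A_{ij}^2$, sum, and use minimality $\tr A = 0$ to eliminate the trace terms, absorbing the $j=2$ diagonal piece into the $\alpha A_{11}A_{22}$ term. The bookkeeping on index ranges matches the paper's computation exactly.
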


\begin{proof}
Applying Gauss formula we have
\begin{align*}
\bric_\alpha(e_1,e_2)=&\sum_{i=2}^n(A_{11}A_{ii}-A_{1i}^2)+\alpha 
\sum_{j=3}^n(A_{22}A_{jj}-A_{2j}^2)\\
=&-\sum_{i=1}^nA_{1i}^2+\alpha(-A_{22}(A_{11}+A_{22})-\sum_{j=3}^nA_{2j}^2)\\
=&-\sum_{i=1}^nA_{1i}^2-\alpha\sum_{j=2}^nA_{2j}^2-\alpha 
A_{11}A_{22}
\end{align*}
\end{proof}

Using the above computation, we obtain the following estimate of the 
curvature term. This estimate introduces some constraints on $\alpha$ and a 
second parameter $a$.

\begin{Prop}\label{prop:secondform}
Let $a,\alpha>0$ such that $a>\frac12$, $2a\ge \alpha$ and
\[
W=(a-\frac12)\big(a-\frac{n-2}{2n}(1+2\alpha)\big)-\frac{n-2}{4n}(1-\alpha)^2>0
\]
Let us define
\[
f=\frac{(n-2)^2}{8W}\Big((a-\frac12)\frac{n-2}n(1+\alpha\frac{n-4}{n-2})^2
 +(1-\alpha)^2(a+\frac{n-2}{2n}-\frac2n\alpha)\Big)
\] 
Then
\[
ar^2|A|^2+f(1-|dr|^2)\ge 
-r^2\bric_\alpha(e_1,e_2)+(p,\nu)((n-2-\alpha)A_{11}+\alpha(n-3)A_{22})
\]
\end{Prop}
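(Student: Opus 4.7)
The approach is to view the inequality to be proven as a quadratic minimization problem in the entries of $A=A_M$ and to appeal to Remark~\ref{rmk:quadratic}. Substituting the expression from Proposition~\ref{prop:biric} rearranges the claim into
\[
r^2\bigl(a|A|^2+\bric_\alpha(e_1,e_2)\bigr)+f(1-|dr|^2)\;\ge\;(p,\nu)\bigl[(n-2-\alpha)A_{11}+\alpha(n-3)A_{22}\bigr].
\]
I first read off the coefficient of each $A_{ij}^2$ in $a|A|^2+\bric_\alpha$: the off-diagonal squares carry coefficients $2a-1$, $2a-\alpha$ or $2a$, all non-negative under the standing hypotheses $a>\tfrac12$ and $2a\ge\alpha$, and can therefore be discarded as slack. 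For the diagonal part, the trace-free condition $\sum_i A_{ii}=0$ and Cauchy–Schwarz give $\sum_{i\ge 3}A_{ii}^2\ge(A_{11}+A_{22})^2/(n-2)$, which reduces the quadratic form to a two-dimensional one in $X=(A_{11},A_{22})$.

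A direct expansion then produces $Q_0(X)=X^\top M X$ with
\[
M=\frac{1}{n-2}\begin{pmatrix}a(n-1)-(n-2) & a-\tfrac{\alpha(n-2)}{2}\\[1mm] a-\tfrac{\alpha(n-2)}{2} & a(n-1)-\alpha(n-2)\end{pmatrix},
\]
and a short computation yields $\det M=\tfrac{n}{n-2}W$, so the hypothesis $W>0$ (combined with the manifest positivity of the diagonal entries in the regime considered) makes $M$ positive definite. Writing $B=(n-2-\alpha,\,\alpha(n-3))^\top$ and rescaling $Y=rX$ absorbs the factor $r^2$ into $Q_0$, and the remaining inequality becomes $Q_0(Y)-\tfrac{(p,\nu)}{r}B^\top Y+f(1-|dr|^2)\ge 0$.

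Remark~\ref{rmk:quadratic} now identifies the infimum in $Y$ as $-\tfrac{(p,\nu)^2}{4r^2}B^\top M^{-1}B$. Invoking the standard identity $(p,\nu)^2=r^2(1-|dr|^2)$, which follows from the orthogonal decomposition $p=p^T+(p,\nu)\nu$ (with $p^T$ tangent to $M$) together with $|dr|^2=|p^T|^2/r^2$, this infimum equals $-\tfrac14(1-|dr|^2)\,B^\top M^{-1}B$. Hence the proposition reduces to the clean scalar inequality $f\ge \tfrac14 B^\top M^{-1}B$.

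The only remaining step — and the main obstacle — is the algebraic verification that the prescribed $f$ is exactly $\tfrac14 B^\top M^{-1}B$; this is what dictates the rather elaborate shape of $f$. Expanding the numerator $(n-2-\alpha)^2c_{22}-\alpha(n-3)(n-2-\alpha)c_{12}+\alpha^2(n-3)^2c_{11}$ and dividing by $\det M=nW/(n-2)$, one must group the result into terms proportional to $(a-\tfrac12)$ and to $(1-\alpha)^2$ to obtain the factorized expression stated. The bookkeeping is delicate but mechanical, and it is precisely this computation that identifies $W$ as the correct denominator (the determinant) and $f$ as the correct numerator (the Remark's quadratic minimum).
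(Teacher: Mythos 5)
Your proposal follows essentially the same route as the paper. Both arguments use Proposition~\ref{prop:biric} to rewrite the bi-Ricci term, discard the off-diagonal entries $A_{ij}$ ($i\ne j$) since their coefficients $2a-1$, $2a-\alpha$, $2a$ are non-negative under $a>\tfrac12$ and $2a\ge\alpha$, reduce the problem on the trace-zero diagonal to a two-variable quadratic, and invoke Remark~\ref{rmk:quadratic}. Your Cauchy--Schwarz step $\sum_{i\ge3}A_{ii}^2\ge(A_{11}+A_{22})^2/(n-2)$ is a cosmetic reparametrization of the paper's explicit orthonormal decomposition of $F_n$ in terms of $(z_1,z_2)$; indeed one checks that your $M$ and the paper's $2\times2$ matrix are congruent, with $\det M=\tfrac{n}{n-2}W$, so the two minimizations agree.

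Two items need tightening. First, positive definiteness of $M$ is not ``manifest'' from the general hypotheses: $M_{11}=\tfrac{a(n-1)-(n-2)}{n-2}>0$ requires $a>\tfrac{n-2}{n-1}$, which does not follow from $a>\tfrac12$ alone. You need to argue it: from $W>0$ and $a>\tfrac12$ one gets $a>\tfrac{n-2}{2n}(1+2\alpha)$, and then $\operatorname{tr}M=\tfrac{2a(n-1)-(n-2)(1+\alpha)}{n-2}>0$ follows from $a>\tfrac12$ when $\alpha(n-2)\le1$ and from $a>\tfrac{n-2}{2n}(1+2\alpha)$ when $\alpha(n-2)\ge1$; together with $\det M>0$ this gives positive definiteness. (The paper sidesteps this because its coordinate choice makes one diagonal entry equal to $a-\tfrac12$.) Second, and more substantively, you defer the verification that the paper's $f$ equals $\tfrac14 B^\top M^{-1}B$ as ``delicate but mechanical''. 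That computation \emph{is} the proposition --- it is exactly what identifies the stated $f$ and $W$ --- and the paper carries it out explicitly, writing the minimum as a sum of three terms that then collapses to the stated form. Without doing it, the proof is incomplete, even though the framework you set up is the right one and does lead to the stated formula.
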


\begin{proof}
By Proposition~\ref{prop:biric}, the right-hand side of the expected inequality 
satisfies to
\begin{equation}\label{eq:curvterm2}
\begin{split}
-r^2\bric_\alpha(e_1,e_2)+&(p,\nu)((n-2-\alpha)A_{11}+\alpha(n-3)A_{22})\\
=&r^2\Big(\sum_{i=1}^nA_{1i}^2+\alpha\sum_{j=2}^nA_{2j}^2+\alpha 
A_{11}A_{22}\\
&\qquad+(\frac p{r^2},\nu)((n-2-\alpha)A_{11}+\alpha(n-3)A_{22})\Big)\\
=&r^2\Big(A_{11}^2+\alpha A_{22}^2+\alpha A_{11}A_{22} 
+\sum_{i=2}^nA_{1i}^2+\alpha\sum_{j=3}^nA_{2j}^2\\
&\qquad+(\frac 
p{r^2},\nu)((n-2-\alpha)A_{11}+\alpha(n-3)A_{22})\Big)
\end{split}
\end{equation}
The vector $A_\Delta=(A_{11},\cdots, A_{nn})$ belongs to the sub-space 
$F_n=\{X\in \R^n\mid 
x_1+\cdots+x_n=0\}$. We write a decomposition in an orthonormal basis of 
$F_n$ as
\[
\begin{pmatrix}
A_{11}\\\vdots\\A_{nn}
\end{pmatrix}=\sum_{i=1}^{n-3}\begin{pmatrix}
0\\0\\E_i
\end{pmatrix}x_i + \frac1{\sqrt{2n(n-2)}}\begin{pmatrix}
n-2\\n-2\\-2\\\vdots\\-2
\end{pmatrix}z_1+\frac1{\sqrt 2}\begin{pmatrix}
1\\-1\\0\\\vdots\\0
\end{pmatrix}z_2
\]
where $(E_i)_{1\le i\le n-3}$ is an orthonormal basis of $F_{n-2}$. So we have
\begin{equation}\label{eq:curvterm}
\begin{split}
A_{11}^2+&\alpha A_{22}^2+\alpha A_{11}A_{22}+(\frac 
p{r^2},\nu)((n-2-\alpha)A_{11}+\alpha(n-3)A_{22})\\
=&(\frac{\sqrt{n-2}}{\sqrt{2n}}z_1+\frac{z_2}{\sqrt 
2})^2+\alpha(\frac{\sqrt{n-2}}{\sqrt{2n}}z_1-\frac{z_2}{\sqrt 2})^2+\alpha 
(\frac{n-2}{2n}z_1^2-\frac12 z_2^2)\\
&+(\frac 
p{r^2},\nu)\big(\frac{\sqrt{n-2}}{\sqrt{2n}}(n-2+\alpha(n-4))z_1+\frac{n-2}{\sqrt
2}(1-\alpha)z_2\big)\\
=&\frac{n-2}{2n}(1+2\alpha)z_1^2+\sqrt{\frac{n-2}n}(1-\alpha)z_1z_2+\frac12 
z_2^2\\
&+(\frac 
p{r^2},\nu)\frac{n-2}{\sqrt2}\big(\sqrt{\frac{n-2}{n}}(1+\alpha\frac{n-4}{n-2})z_1+
 (1-\alpha)z_2\big)
\end{split}
\end{equation}
For $a>0$, we are interested in the minimum (if it exists) of
\begin{equation}\label{eq:mino}
\begin{split}
a(z_1^2+z_2^2)-&\frac{n-2}{2n}(1+2\alpha)z_1^2-\sqrt{\frac{n-2}n}(1-\alpha)z_1z_2-\frac12
 z_2^2\\
&-(\frac 
p{r^2},\nu)\frac{n-2}{\sqrt2}(\sqrt{\frac{n-2}{n}}(1+\alpha\frac{n-4}{n-2})z_1+(1-\alpha)z_2)
\end{split}
\end{equation}
The matrix of the quadratic part of the above expression is
\[
\begin{pmatrix}
a-\frac{n-2}{2n}(1+2\alpha)&-\sqrt{\frac{n-2}{4n}}(1-\alpha)\\
-\sqrt{\frac{n-2}{4n}}(1-\alpha)&a-\frac12
\end{pmatrix}
\]
This matrix is positive definite if $a>\frac12$ and its determinant is positive:
\[
W=(a-\frac12)\big(a-\frac{n-2}{2n}(1+2\alpha)\big)-\frac{n-2}{4n}(1-\alpha)^2>0
\]
If it's the case, by Remark~\ref{rmk:quadratic} with vector $B=-(\frac 
p{r^2},\nu)\frac{n-2}{\sqrt2}\big(\sqrt{\frac{n-2}{n}}(1+\alpha\frac{n-4}{n-2}),(1-\alpha)\big)$,
 the quantity in \eqref{eq:mino} is lower bounded by
\begin{align*}
&-(\frac 
p{r^2},\nu)^2\frac{(n-2)^2}{8W}\Big((a-\frac12)\frac{n-2}n(1+\alpha\frac{n-4}{n-2})^2
 +\frac{n-2}{n}(1-\alpha)^2(1+\alpha\frac{n-4}{n-2})\\
&\qquad\qquad\qquad\qquad+(a-\frac{n-2}{2n}(1+2\alpha))(1-\alpha)^2\Big)\\
=&-(\frac 
p{r^2},\nu)^2\frac{(n-2)^2}{8W}\Big((a-\frac12)\frac{n-2}n(1+\alpha\frac{n-4}{n-2})^2
 +(1-\alpha)^2(a+\frac{n-2}{2n}-\frac2n\alpha)\Big)\\
=&-(\frac p{r^2},\nu)^2f
\end{align*}
Since $(\frac pr,\nu)^2=(1-|dr|^2)$, we have then proved that
\begin{align*}
a(z_1^2+z_2^2)+\frac f{r^2}(1-|dr|^2)\ge &
\frac{n-2}{2n}(1+2\alpha)z_1^2+\sqrt{\frac{n-2}n}(1-\alpha)z_1z_2+\frac12 
z_2^2\\
&+(\frac 
p{r^2},\nu)\frac{n-2}{\sqrt2}(\sqrt{\frac{n-2}{n}}(1+\alpha\frac{n-4}{n-2})z_1+(1-\alpha)z_2)
\end{align*}

Combining this with \eqref{eq:curvterm2} and \eqref{eq:curvterm}, if $2a\ge 
\alpha$, we have
\begin{align*}
a|A|^2+\frac f{r^2}(1-|dr|^2)\ge& a (|A_\Delta|^2+\sum_{i\neq 
j}A_{ij}^2)+\frac f{r^2}(1-|dr|^2)\\
\ge & A_{11}^2+\alpha A_{22}^2+\alpha A_{11}A_{22}+(\frac 
p{r^2},\nu)((n-2-\alpha)A_{11}+\alpha(n-3)A_{22})\\
& +a\sum_{i\neq j}A_{ij}^2\\
\ge & A_{11}^2+\alpha A_{22}^2+\alpha A_{11}A_{22}+\sum_{i=2}^nA_{1i}^2+\alpha 
\sum_{j=3}^nA_{2j}^2\\
&+(\frac 
p{r^2},\nu)((n-2-\alpha)A_{11}+\alpha(n-3)A_{22})\\
\ge& 
-\bric_\alpha(e_1,e_2)+\frac{(p,\nu)}{r^2}((n-2-\alpha)A_{11}+\alpha(n-3)A_{22})
\end{align*}
This is the expected estimate
\end{proof}

\subsection{Proof of Theorem~\ref{th:stable_estim}}

Let us assume that the basis is chosen such that $\widetilde 
\Lambda_\alpha=\widetilde 
\bric_\alpha(\tilde e_1,\tilde e_2)$. From 
\eqref{eq:stab}, we are looking for a lower bound for 
$r^2|A|^2-\frac{n(n-2)}2+\frac{n^2-4}4|dr|^2$. Under the assumptions of 
Proposition~\ref{prop:secondform}, $\alpha\le 1$ (such that $n-2-\alpha\ge 
\alpha(n-3)$) and using 
Proposition~\ref{prop:confbiric}, we have
\begin{align*}
a\Big(r^2|A|^2-\frac{n(n-2)}2+&\frac{n^2-4}4|dr|^2\Big)\\
\ge &
-r^2\bric_\alpha(e_1,e_2)+(p,\nu)((n-2-\alpha)A_{11}+\alpha(n-3)A_{22})\\
& \quad-f(1-|dr|^2)-a\frac{n(n-2)}2+a\frac{n^2-4}4|dr|^2\\
\ge &-\widetilde\bric_\alpha (\tilde e_1,\tilde e_2) 
+2(n-1+\alpha(n-2))-(n+\alpha(n-1))|dr|^2\\
& \quad-\big((n-2-\alpha)dr(e_1)^2+\alpha(n-3)dr(e_2)^2\big)\\
& \quad-f(1-|dr|^2)-a\frac{n(n-2)}2+a\frac{n^2-4}4|dr|^2\\
\ge & C(|dr|^2)-\widetilde{\Lambda}_\alpha
\end{align*}
where 
\[
C(t)=2(n-1+\alpha(n-2))-(2n-2+\alpha(n-2))t-f(1-t)-a\frac{n(n-2)}2+a\frac{n^2-4}4t
\]
$C$ is an affine function and $0\le |dr|^2\le 1$, so $C(|dr|^2)\ge \min 
(C(0),C(1))$. We have
\begin{align*}
C(1)=&2(n-1+\alpha(n-2))-(2n-2+\alpha(n-2))-a\frac{n(n-2)}2+a\frac{n^2-4}4\\
&=\alpha(n-2)-a\frac{(n-2)^2}4=(n-2)(\alpha-a\frac{n-2}4)
\end{align*}
and
\begin{align*}
C(0)=2(n-1+\alpha(n-2))-f-a\frac{n(n-2)}2
\end{align*}

If we consider $a=\frac{11}{10}$ and $\alpha=\frac{40}{43}$, we have 
$a>\frac12$, 
$2a\ge \alpha$, $\alpha\le 1$ and $W=\frac{26697}{184900}>0$. So the above 
computations apply. We have
\[
C(0)=\frac{731975}{1530628}\simeq 0.47 \quad\text{and}\quad 
C(1)=\frac{543}{1720}\simeq 0.31
\]
So for these values of $a$ and $\alpha$, and with $\delta=\frac3{10}\le 
\min(C(0),C(1))$, we have
\[
V=a\left(r^2|A|^2-\frac{n(n-2)}2+\frac{n^2-4}4|dr|^2\right)\ge 
\delta-\widetilde\Lambda_\alpha
\]
By \eqref{eq:stab}, the spectral estimate \eqref{eq:specest} is true. 
Theorem~\ref{th:stable_estim} is proved.
\section{The $\mu$-bubble construction}
\label{sec:mububble}

In this section, we produce a warped $\mu$-bubble with a spectral Ricci 
curvature lower bound. So we start with a connected complete non-compact 
Riemannian 
manifold $(N^n,\bar g)$ with a spectral lower bound on the $\alpha$-bi-Ricci 
curvature: there is a smooth function $\barre V$ on $N$ such that 
\[
\barre V\ge \delta -\barre\Lambda_\alpha
\]
and 
\begin{equation}\label{eq:specbiric}
\int_N |\barre \nabla\phi|_{\bar g}^2 dv_{\bar g}\ge \int_N \frac1a \barre 
V\phi^2 
dv_{\bar 
g}
\end{equation}
for any $\phi\in C_c^1(N)$

\begin{Thm}\label{th:mububble}
Assume $(N,\bar g)$ as above with $n=5$, $a=\frac{11}{10}$,
$\alpha=\frac{40}{43}$ and $\delta=\frac3{10}$. Let $\Ome_+$ be 
a domain in $N$ (\textit{i.e.} an open subset with compact smooth boundary) 
such that $N\setminus \barre \boN_{100\pi}(\Ome_+)\neq \emptyset$. 
Then there is a domain $\Ome_*$ with 
\begin{itemize}
\item $\Ome_+\subset \Ome_*\subset \barre\boN_{100\pi}(\Ome_+)$ and
\item there is a smooth function $V$ on $\Sigma=\partial\Ome_*$ such that
\[
V\ge \frac\delta2-\alpha\lambda^\Sigma
\]
and
\begin{equation}\label{eq:specric2}
\frac4{4-a}\int_\Sigma|\nabla\phi|^2dv_g\ge \int_\Sigma V\phi^2dv_g
\end{equation}
for any $\phi\in C^1(\Sigma)$ where $g$  is the induced metric on $\Sigma$.
\end{itemize}
\end{Thm}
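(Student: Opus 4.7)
The plan follows the warped $\mu$-bubble strategy of Chodosh--Li--Minter--Stryker adapted to the $\alpha$-bi-Ricci setting of this paper: construct $\Ome_*$ as a minimizer of a warped area--prescribed-mean-curvature functional in the band $U = \barre\boN_{100\pi}(\Ome_+)\setminus\barre\Ome_+$, extract the resulting weighted stability inequality from its second variation, and then transfer the ambient spectral bound \eqref{eq:specbiric} onto $\Sigma$ via Gauss' equation together with the identity
\[
\barre{\bric}_\alpha(\nu, e) = \barre{\ric}(\nu,\nu) + \alpha\bigl(\barre{\ric}(e,e) - \barre R(e,\nu,\nu,e)\bigr),
\]
which converts the $\alpha$-bi-Ricci information on $N$ into a combined bound on $\barre{\ric}(\nu,\nu)$ and the intrinsic Ricci of $\Sigma$.

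Concretely, I would choose smooth $u(\rho)>0$ and $h(\rho)$ on $\inter U$, depending only on the signed distance $\rho$ to $\partial\Ome_+$, with $h\to+\infty$ as $\rho\to 0^+$ and $h\to-\infty$ as $\rho\to 100\pi^-$, and $u$ a trigonometric solution of a linear ODE whose coefficients involve $\delta$ and $\alpha$. Minimizing
\[
\boA(\Ome) = \int_{\partial^*\Ome \cap \inter U} u\, d\mathcal{H}^{n-1} - \int_{\Ome \cap U} h\, u\, dv_{\barre g}
\]
over $\Ome_+\subset\Ome\subset\barre\boN_{100\pi}(\Ome_+)$ yields, by standard GMT (as in \cite{ChLiMiSt}), a smooth minimizer $\Ome_*$ strictly interior to the band (since $h\to\pm\infty$ at the two sides of $U$), with $\Sigma=\partial\Ome_*\setminus\partial\Ome_+$ smooth (ambient dimension $5$, so the singular set is empty). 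The first variation gives $H_\Sigma = h - \barre g(\barre\nabla\log u,\nu)$, and the second variation gives the weighted stability
\[
\int_\Sigma u\bigl(|\nabla^\Sigma\psi|^2 - (|A_\Sigma|^2 + \barre\ric(\nu,\nu))\psi^2\bigr) \ge \int_\Sigma \bigl(u\,\partial_\nu h + \Hess_{\barre g} u(\nu,\nu) + h\,\partial_\nu u\bigr)\psi^2.
\]

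The final step---and main obstacle---is the spectral rearrangement. Substituting $\psi = u^{-\beta}\phi$ in the stability and using Gauss' equation together with the identity above replaces $\barre\ric(\nu,\nu)$ by $\barre{\bric}_\alpha(\nu,e) - \alpha\ric^\Sigma(e,e) + \alpha\bigl(H_\Sigma A(e,e)-|A(e,\cdot)|^2\bigr)$; the cross terms in the second fundamental form are absorbed using $|A_\Sigma|^2\ge \tfrac1{n-1}H_\Sigma^2$ and the prescribed formula for $H_\Sigma$. To eliminate the remaining ambient term I would test the spectral inequality \eqref{eq:specbiric} against a normally-transported extension of $\phi$ built from a coarea argument on a thin tubular neighborhood of $\Sigma$, and combine the resulting inequality with the rearranged stability. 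The values $a = \tfrac{11}{10}$, $\alpha = \tfrac{40}{43}$, $\delta = \tfrac{3}{10}$ together with the choice of $u$ and the band width $100\pi$ must be tuned so that, after optimizing the exponent $\beta$ via Young's inequality, the residual gradient coefficient on the left is exactly $\tfrac{4}{4-a}$ and the residual curvature constant is at least $\delta/2$ (half of $\delta$ being spent to control the warping, the Hessian of $u$, and the $\alpha$-cross term). The delicate algebraic bookkeeping that forces these exact constants is where I expect the bulk of the work to lie.
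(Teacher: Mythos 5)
Your overall architecture is right (warped prescribed-mean-curvature functional in a band, extract the second variation, feed in Gauss' equation, absorb cross terms via Young, tune the Riccati function $h$), and your identity relating $\barre\bric_\alpha(\eta,e)$ to $\barre\ric(\eta,\eta)$ and $\ric^\Sigma$ is exactly how the paper rewrites the curvature term. But there is a central missing ingredient, and the replacement you propose does not work.

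The paper's warp factor is not a chosen function of the band coordinate; it is $w^a$, where $w>0$ is the Fischer--Colbrie--Schoen eigenfunction solving $-a\barre\Delta w=\barre V w$ on $N$, whose existence follows from the spectral hypothesis \eqref{eq:specbiric}. This choice is what makes the whole machine turn: in the second variation one meets $\Hess_{\barre g}w(\eta,\eta)$, which via
\[
\barre\nabla^2 w(\eta,\eta)=\barre\Delta w-\Delta_\Sigma w-H\,dw(\eta)
\]
and the equation for $w$ converts the \emph{integral} spectral bound on $N$ into the pointwise bound $\barre V\ge\delta-\barre\Lambda_\alpha$ evaluated along $\Sigma$. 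Without this, the spectral hypothesis never reaches the hypersurface. Your plan to instead minimize with a purely distance-dependent weight $u(\rho)$ and then ``test the spectral inequality against a normally-transported extension of $\phi$ built from a coarea argument on a thin tubular neighborhood'' fails: \eqref{eq:specbiric} is an inequality between $n$-dimensional integrals. If you plug in a function concentrated in an $\epsilon$-tube around $\Sigma$, the zeroth-order term localizes to $\Sigma$ as $\epsilon\to0$, but the gradient term picks up an $O(\epsilon^{-2})$ normal-derivative contribution from the cutoff; there is no way to make the two sides comparable. Spectral positivity of $-a\barre\Delta+\barre V$ on $N$ simply does not localize to a codimension-one set by a cutoff argument --- the FCS substitution is precisely the device that circumvents this.

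Once you adopt the weight $w^a$, the remaining steps you sketch become correct but require a more careful quadratic-form analysis than ``$|A_\Sigma|^2\ge\frac1{n-1}H_\Sigma^2$ plus Young.'' The paper decomposes the traceless second fundamental form and ends up with a $3\times3$ quadratic form in $(H,z,h)$ whose positivity (with the specific values $a=\tfrac{11}{10}$, $\alpha=\tfrac{40}{43}$, $k=4$) gives the crucial coercivity $K\ge\frac1{22}h^2$; this in turn dictates the Riccati ODE $-k'=\frac3{44}+\frac5{242}k^2$ for $h$ and the band width $11\pi\sqrt{88/15}\le100\pi$. The substitution $\phi=w^{-a/2}\psi$ with $\epsilon=\frac1{4-a}$ in Young's inequality is where the constant $\frac4{4-a}$ comes from, which matches your intuition. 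Also a small correction: $h$ (not $u$) is the trigonometric function, and it solves a Riccati equation, not a linear ODE.
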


\subsection{Construction of the $\mu$-bubble}

Because of the spectral control \eqref{eq:specbiric} on $N$, we know (see 
\cite{FCSc}) that there is 
a positive function $w$ on $N$ such that 
\begin{equation}\label{eq:fischer}
-a\barre \Delta w=\barre Vw\ge (\delta-\barre\Lambda_\alpha)w
\end{equation}

Let us recall quickly the construction of the $\mu$-bubble. Let $\Ome_-$ be a 
domain in $N$ such that $\Ome_+\subset\subset \Ome_-\subset 
\barre\boN_{100\pi}(\Ome_+)$. Let $h:\Ome_-\setminus\Ome_+\to \R$ be a smooth 
function  such 
that $\lim_{p\to \partial\Ome_+} h(p)=+\infty$ and $\lim_{p\to \partial \Ome_-} 
h(p)=-\infty$. Let $\underline\Ome$ be a domain with 
$\Ome_+\subset\subset \underline\Ome\subset\subset \Ome_-$.

For any sets of finite perimeter $\Ome$ with $\Ome_+\subset\subset 
\Ome\subset\subset \Ome_-$, we consider the quantity
\[
\boA(\Ome)=\int_{\partial^*\Ome}w^a-\int_U (\chi_{\Ome}-\chi_{\underline\Ome}) 
hw^a
\]
where $\partial^*\Ome$ is the reduced boundary of $\Ome$. By similar argument 
to the ones in \cite{ChoLi2,Zhu}, there
there is a set of finite perimeter $\Ome_*$ ($\Ome_+\subset\subset 
\Ome_*\subset\subset \Ome_-$) which minimize the 
functional $\boA$. Moreover its reduced boundary $\partial^*\Ome_*=\Sigma$ is 
non empty ($N\setminus \barre \boN_{100\pi}(\Ome_+)\neq \emptyset$) and 
smooth (see for 
example \cite{Mor1,Tam}). 

\subsection{Spectral Ricci-curvature bound of the $\mu$-bubble} 

We denote by $k=n-1$ the dimension of $\Sigma$ and by $\eta$ the outgoing unit 
normal to $\Sigma$.

As in \cite[Proposition~4.2]{ChLiMiSt}, if $\phi$ is a function on $\Sigma$, 
writing the first variation of $\boA$ 
for a variation $\{\Ome_t\}$ of $\Ome_*$ generated by $\phi\eta$ gives
\[
0=\frac{d}{dt}\boA(\Ome_t)_{|t=0}=\int_\Sigma 
(Hw^a+aw^{a-1}dw(\eta)-hw^a)\phi=\int_\Sigma (H+aw^{-1}dw(\eta)-h)w^a\phi
\]
Since this is true for any $\phi$,
\begin{equation}\label{eq:meancurv}
H=h-ad\ln w(\eta)
\end{equation}

Computing the second derivative of $\boA(\Ome_t)$, we obtain
\begin{align*}
0\le \frac{d^2}{dt^2}\boA(\Ome_t)_{|t=0}=\int_\Sigma 
w^a\Big(&-\phi\Delta\phi-(|B|^2+\barre\ric(\eta,\eta))\phi^2-aw^{-2}dw(\eta)^2
 \phi^2 \\
&
+aw^{-1}\barre\nabla^2w(\eta,\eta)\phi^2-aw^{-1}(\barre\nabla 
w,\nabla\phi)\phi-dh(\eta)\phi^2\Big)
\end{align*}
where $B$ is the second fundamental form of $\Sigma$. So 
\begin{align*}
0\le \int_\Sigma -\Div(w^a\phi\nabla\phi)+
w^a\Big(|\nabla\phi|^2-&(|B|^2+\barre\ric(\eta,\eta))\phi^2-aw^{-2}dw(\eta)^2
 \phi^2 \\
&
+aw^{-1}\barre\nabla^2w(\eta,\eta)\phi^2-dh(\eta)\phi^2\Big)\\
\end{align*}
Using $\barre\nabla^2w(\eta,\eta)=\barre \Delta w-\Delta w-Hdw(\eta)$, we obtain
\begin{equation}\label{eq:stabmu}
\begin{split}
0\le \int_\Sigma 
w^a\Big(|\nabla\phi|^2-&(|B|^2+\barre\ric(\eta,\eta))\phi^2-aw^{-2}dw(\eta)^2
 \phi^2 \\
&
+aw^{-1}(\barre \Delta w-\Delta w-Hdw(\eta))\phi^2-dh(\eta)\phi^2\Big)\\
\end{split}
\end{equation}
For $\phi=w^{-a/2}\psi$, we have $\nabla \phi=w^{-a/2}\nabla\psi-\frac 
a2w^{-a/2-1}\psi\nabla 
w$. So we can write
\begin{align*}
\int_\Sigma w^a(|\nabla\phi|^2-aw^{-1}\Delta 
w\phi^2)=&\int_\Sigma|\nabla\psi|^2-aw^{-1}\psi(\nabla 	
w,\nabla\psi)+\frac{a^2}4\psi^2w^{-2}|\nabla w|^2-a\psi^2w^{-1}\Delta w\\
=&\int_\Sigma |\nabla\psi|^2-a\Div(\psi^2w^{-1}\nabla w)+aw^{-1}\psi(\nabla 	
w,\nabla\psi)\\
&\qquad-(a-\frac{a^2}4)\psi^2w^{-2}|\nabla w|^2\\
=&\int_\Sigma |\nabla\psi|^2+aw^{-1}\psi(\nabla 	
w,\nabla\psi)-(a-\frac{a^2}4)\psi^2w^{-2}|\nabla w|^2\\
\end{align*}
Using that $w^{-1}\psi(\nabla w,\nabla\psi)\le \eps 
|\nabla\psi|^2+\frac1{4\eps}\psi^2w^{-2}|\nabla w|^2$ with $\eps=\frac1{4-a}$,
we get
\[
\int_\Sigma w^a(|\nabla\phi|^2-aw^{-1}\Delta 
w\phi^2)\le \frac{4}{4-a}\int_\Sigma |\nabla \psi|^2
\]
From \eqref{eq:stabmu} and \eqref{eq:fischer}, we then obtain
\begin{equation}\label{eq:stabmu2}
\begin{split}
\frac4{4-a}\int_\Sigma|\nabla\psi|^2\ge&\int_\Sigma 
\Big(|B|^2+\barre\ric(\eta,\eta)+aw^{-2}dw(\eta)^2-aw^{-1}\barre\Delta w+aHd\ln 
(\eta)\Big)\psi^2\\
&\qquad\qquad+adh(\eta)\psi^2\\
\ge 
&\int_\Sigma\Big(|B|^2+\barre\ric(\eta,\eta)+\delta-\barre\Lambda_\alpha 
+aw^{-2}dw(\eta)^2+aHd\ln 
(\eta)\Big)\psi^2\\
&\qquad\qquad+adh(\eta)\psi^2\\
\end{split}
\end{equation}

Let $(e_1,\dots,e_k)$ be an orthonormal basis of $\Sigma$. Using Gauss 
equation, 
we have
\begin{align*}
\alpha\ric^\Sigma(e_1,e_1)=\alpha\sum_{j=2}^kR_{1jj1}^\Sigma=& 
\alpha\sum_{j=2}^k(\barre R_{1jj1}+B_{11}B_{jj}-B_{1j}^2)\\
=&\barre\bric_\alpha(\eta,e_1)-\barre\ric(\eta,\eta)+\alpha\sum_{j=2}^k 
(B_{11}B_{jj}-B_{1j}^2)
\end{align*}
So assuming that $e_1$ is such that $\ric^\Sigma(e_1,e_1)=\lambda^\Sigma$, we 
have
\[
\barre\ric(\eta,\eta)-\barre\Lambda_\alpha\ge 
\barre\ric(\eta,\eta)-\barre\bric_\alpha(\eta,e_1)=-\alpha 
\lambda^\Sigma+\alpha 
\sum_{j=2}^k (B_{11}B_{jj}-B_{1j}^2)
\]
So the above inequality and using $\tr B=H$ in \eqref{eq:stabmu2}, we then get 
the inequality
\begin{align*}
\frac4{4-a}\int_\Sigma|\nabla\psi|^2\ge&\int_\Sigma 
\psi^2\Big(\delta-\alpha\lambda^\Sigma +|B|^2+\alpha \sum_{j=2}^k 
(B_{11}B_{jj}-B_{1j}^2) 
+a(d\ln w(\eta))^2\\
&\qquad\quad+aHd\ln w(\eta)+adh(\eta)\Big)\\
\ge  &\int_\Sigma 
\psi^2\Big(\delta-\alpha\lambda^\Sigma +|B|^2+\alpha HB_{11}-\alpha 
\sum_{j=1}^k 
B_{1j}^2
+a(d\ln w(\eta))^2\\
&\qquad\quad+aHd\ln w(\eta)+adh(\eta)\Big)\\
\end{align*}
Using \eqref{eq:meancurv}, we have
\begin{align*}
K:=|B|^2+\alpha HB_{11}-\alpha 
\sum_{j=1}^k 
B_{1j}^2
&+a(d\ln w(\eta))^2
+aHd\ln w(\eta)=\\
&|B|^2+\alpha HB_{11}-\alpha \sum_{j=1}^k B_{1j}^2 +\frac1a(H-h)^2+H(h-H)
\end{align*}
Let us denote by $\Phi$ the traceless part of $B$ and let $\Phi_\Delta$ denote 
the vector $(\Phi_{11},\dots,\Phi_{kk})\in F_k$. Thus, for $\alpha\le 2$, we 
have
\[
K\ge \frac1kH^2+|\Phi_\Delta|^2+\frac\alpha k H^2+\alpha 
H\Phi_{11}-\alpha(\frac1kH+\Phi_{11})^2+
\frac1a(H-h)^2+H(h-H)
\]
We can write a decomposition of $\Phi_\Delta$ in an orthonormal basis of $F_k$
\[
\Phi_\Delta=\sum_{i=1}^{k-2}\begin{pmatrix}
0\\E_i
\end{pmatrix}x_i+\frac1{\sqrt{k(k-1)}}\begin{pmatrix}k-1\\-1\\ \vdots\\-1 
\end{pmatrix}z
\]
where $(E_i)_{1\le i\le k-2}$ is an orthonormal basis of $F_{k-1}$.
We then have
\begin{align*}
K\ge & \frac1kH^2+z^2+\frac\alpha k H^2+\alpha 
H\sqrt{\frac{k-1}k}z-\alpha(\frac1kH+\sqrt{\frac{k-1}k}z)^2+
\frac1a(H-h)^2+H(h-H)\\
\ge &(\frac1k+\frac\alpha 
k-\frac{\alpha}{k^2}+\frac1a-1)H^2+(1-\alpha\frac{k-1}k)z^2+\frac1a h^2 
+\alpha\sqrt{\frac{k-1}k}(1-\frac2k)Hz+(1-\frac2a)Hh
\end{align*}
The above expression is a quadratic form in $(H,z,h)$ associated to the matrix
\[
G=\begin{pmatrix}
\frac1k+\frac\alpha 
k-\frac{\alpha}{k^2}+\frac1a-1&\frac\alpha2\sqrt{\frac{k-1}k}(1-\frac2k)& 
\frac12-\frac1a\\
\frac\alpha2\sqrt{\frac{k-1}k}(1-\frac2k)& 1-\alpha\frac{k-1}k&0\\
\frac12-\frac1a&0&\frac1a
\end{pmatrix}
\]
Notice that this matrix is positive definite if $1-\alpha\frac{k-1}k>0$ and 
$\det(G)>0$. Actually for $k=4$, $a=\frac{11}{10}$, $\alpha=\frac{40}{43}$, we 
have $1-\alpha\frac{k-1}k=\frac{13}{43}>0$ and
\[
\det\Big(G-\begin{pmatrix}
0&0&0\\0&0&0\\0&0&\frac1{22}
\end{pmatrix}\Big)=\frac{2599}{1789832}>0
\]
So $K\ge \frac1{22}h^2$. Finally, for our values of the parameters, we have
\begin{equation}\label{eq:specric}
\frac4{4-a}\int_\Sigma|\nabla\psi|^2\ge 
\int_\Sigma 
\psi^2\big(\frac\delta2-\alpha\lambda^\Sigma\big)+\psi^2\big(\frac\delta2 
+\frac1{22}h^2+adh(\eta)\big)
\end{equation}

\subsection{End of the proof}

We need to choose the domain $\Ome_-$ and the function $h$. Let 
$\Psi:N\setminus \Ome_+\to \R_+$ be a smoothing of the distance function 
$d_{\bar 
g}(\cdot,\partial\Ome_+)$ such that $\frac12 d_{\bar 
g}(\cdot,\partial\Ome_+)\le \Psi\le 
2 d_{\bar 
g}(\cdot,\partial\Ome_+)$ and $|\barre\nabla \Psi|_{\bar g}\le 2$. Let 
$\eps>0$ small such 
that $(1+\eps)11\pi\sqrt{\frac{88}{15}}$ is a regular value of $\Psi$. Let us 
define $\Ome_-=\Ome_+\cup\{\Phi\le (1+\eps)11\pi\sqrt{\frac{88}{15}}\}$. On 
$\Ome_-$, $d_{\bar 
g}(\cdot,\partial\Ome_+)\le 2(1+\eps)11\pi\sqrt{\frac{88}{15}}\le 
100\pi$, so $\Ome_-\subset \barre\boN_{100\pi}(\Ome_+)$.

On $\{0<\Psi< (1+\eps)11\pi\sqrt{\frac{88}{15}}\}$, we consider the function 
$h$ defined by $h=k\circ\frac{\Psi}{1+\eps}$ where
\[
k(t)=-\sqrt{\frac{33}{10}}\tan(\frac1{11}\sqrt{\frac{15}{88}}t-\frac{\pi}2)
\]
for $t\in(0,11\pi\sqrt{\frac{88}{15}})$. We have 
$\lim_{p\to\partial\Ome_\pm}h(p)=\pm\infty$. Notice that $k$ solves 
$-k'=\frac3{44}+\frac5{242} k^2$ so
\[
|adh(\eta)|=a|k'(\frac{\Psi(p)}{1+\eps})|\frac{|\Psi'(p)|}{1+\eps}\le 
\frac{2a}{1+\eps}(\frac3{44}+\frac5{242}h^2)\le\frac3{20}+\frac1{22}h^2 
=\frac\delta2+\frac1{22} h^2
\]
Hence, the above construction applies and, for our choices of parameters, 
\eqref{eq:specric} becomes
\[
\frac4{4-a}\int_\Sigma|\nabla\psi|^2\ge 
\int_\Sigma 
\psi^2\big(\frac\delta2-\alpha\lambda^\Sigma\big)
\]
This ends the proof of Theorem~\ref{th:mububble}.
\section{Stable Bernstein problem}
\label{sec:proof}

In this section we prove Theorem~\ref{th:main}. This a consequence of the 
following volume growth estimate. 

\begin{Prop}\label{prop:volestim} Let $F:M^5\looparrowright \R^6$ be a 
complete, immersed, two-sided, simply-connected stable minimal hypersurface. 
Let $\boB_\rho$ denote the geodesic ball of radius $\rho>0$ centered at some 
point $p_0$ in $M$ (for the induced metric $g$). Then
\[
\vol(\boB_\rho)\le 
\vol(\B^5)(\frac{800}{43})^{5/2}\big(2\exp(100\pi)\big)^5\rho^5
\]
\end{Prop}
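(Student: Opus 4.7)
The strategy is to run the final step of the Chodosh--Li--Minter--Stryker program in our setting: build a $\mu$-bubble $\Sigma$ whose $\tilde g$-volume is controlled by a Bishop--Gromov comparison, pull the estimate back to the induced metric $g$ using the conformal factor $r^{-2}$, and close up with the sharp isoperimetric inequality for minimal hypersurfaces in $\R^6$. Fix the base point $p_0 \in M$ and the radius $\rho > 0$. By translating the Euclidean origin I may arrange $|F(p_0)|$ to be arbitrarily close to $\rho$ from above, so that $\boB_\rho \subset N = M \setminus F^{-1}(0)$ and $r \le 2\rho + o(1)$ on $\boB_\rho$. Pick a regular value $\rho' > \rho$ very close to $\rho$ and set $\Ome_+ = \boB_{\rho'}$, a relatively compact domain with smooth boundary containing $\boB_\rho$. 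Since $(N,\tilde g)$ is complete and non-compact by Gulliver--Lawson, the hypothesis $N \setminus \overline{\boN}_{100\pi}(\Ome_+) \neq \emptyset$ of Theorem~\ref{th:mububble} is satisfied.

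The next step combines the three main pieces. Theorem~\ref{th:stable_estim} installs the spectral $\alpha$-bi-Ricci bound on $(N,\tilde g)$, and Theorem~\ref{th:mububble} then produces a $\mu$-bubble $\Sigma = \partial\Ome_*$ with $\Ome_+ \subset \Ome_* \subset \overline{\boN}_{100\pi}(\Ome_+)$ and the spectral Ricci inequality~\eqref{eq:specric2}. Substituting $a=\frac{11}{10}$, $\alpha=\frac{40}{43}$, $\delta=\frac{3}{10}$ and rearranging recasts \eqref{eq:specric2} as
\[
\int_\Sigma |\nabla\psi|^2 + \frac{29}{43}\int_\Sigma \lambda^\Sigma \psi^2 \ge \frac{87}{800}\int_\Sigma \psi^2,
\]
which is a (weighted) spectral Ricci lower bound with effective constant $K = \frac{43}{800}$. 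Antonelli--Xu's Bishop--Gromov comparison applied to the closed $4$-manifold $\Sigma$ will then produce $\vol_{\tilde g}(\Sigma) \le 5\,\vol(\B^5)\big(\frac{800}{43}\big)^{2}$, namely the volume of the model round $4$-sphere of radius $1/\sqrt{K}$.

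To transfer this back to the induced metric I use the identity $|d\ln r|_{\tilde g} = |dr|_g \le 1$, which holds because $F$ is minimal in $\R^6$. Together with $\Sigma \subset \overline{\boN}_{100\pi}(\Ome_+)$ and $r \le 2\rho + o(1)$ on $\Ome_+$, this forces $r \le 2\rho\,\exp(100\pi) + o(1)$ on $\Sigma$. Since $dv_g = r^4\,dv_{\tilde g}$ on $\Sigma$, this yields $\vol_g(\Sigma) \le (2\rho\exp(100\pi))^4\,\vol_{\tilde g}(\Sigma)$. Finally I apply Brendle's sharp isoperimetric inequality \cite{Bre2} to the domain $\Ome_* \subset M$, which gives
\[
\vol_g(\Ome_*) \le \frac{1}{5^{5/4}\,\vol(\B^5)^{1/4}}\,\vol_g(\Sigma)^{5/4}.
\]
Since $\boB_\rho \subset \Ome_+ \subset \Ome_*$, assembling the previous estimates and letting $\rho' \downarrow \rho$ delivers the stated bound
\[
\vol(\boB_\rho) \le \vol(\B^5)\Big(\frac{800}{43}\Big)^{5/2}\big(2\exp(100\pi)\big)^5\,\rho^5.
\]

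The principal technical ingredient is the Antonelli--Xu step, where one must convert the weighted spectral Ricci inequality \eqref{eq:specric2} into the sharp explicit volume bound with the correct constants $K = \frac{43}{800}$ and model radius $\sqrt{800/43}$. The remaining steps are largely bookkeeping, though the placement of the Euclidean origin and the limit $\rho' \downarrow \rho$ require some care to keep $\Ome_+$ inside $N$ while still containing $\boB_\rho$.
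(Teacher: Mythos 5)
Your proposal follows the same overall architecture as the paper—shift the origin, apply Theorems~\ref{th:stable_estim} and~\ref{th:mububble} to produce a $\mu$-bubble $\Sigma=\partial\Ome_*$, invoke the Antonelli--Xu bound, convert the $\tilde g$-volume to $g$-volume via the conformal factor and the gradient bound $|d\ln r|_{\tilde g}\le 1$, then close with Brendle's isoperimetric inequality—and your constant-chasing ($K=43/800$, $\vol(\S^4)=5\vol(\B^5)$, the exponent $5/4$) is all correct. The minor variant of placing the Euclidean origin just outside $F(\boB_{\rho'})$ rather than at $F(p_0)$, as the paper does, is harmless and arguably cleaner since it keeps $\boB_\rho$ entirely inside $N$.

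However, there is a genuine gap at the Antonelli--Xu step. You apply their volume bound directly to ``the closed $4$-manifold $\Sigma$,'' but $\Sigma=\partial\Ome_*$ is a priori just a smooth closed hypersurface, with no control on its number of connected components. The Antonelli--Xu Bishop--Gromov/Bonnet--Myers theorem, like any diameter-plus-volume comparison, is a statement about \emph{connected} closed manifolds; the spectral Ricci inequality~\eqref{eq:specric2} does hold component-by-component, so each component of $\Sigma$ has $\tilde g$-volume at most $(800/43)^2\vol(\S^4)$, but the total $\vol_{\tilde g}(\Sigma)$ would then be bounded only by that constant times the (uncontrolled) number of components. The paper avoids this by first invoking the one-end structure theorem of Cao--Shen--Zhu together with simple connectedness of $M$: it takes $\Ome_{**}$ to be the component of $\Ome_*$ containing $\boB_\rho$, observes that the unbounded component of $M\setminus\Ome_{**}$ then has a \emph{single} boundary component $\Sigma_0$, and applies Antonelli--Xu to the connected $\Sigma_0$ and Brendle's inequality to the compact region $\Ome'$ it bounds (which still contains $\boB_\rho$). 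Without this topological reduction—or an equivalent argument guaranteeing that a single connected component of $\Sigma$ already separates $\boB_\rho$ from infinity—your estimate for $\vol_g(\Omega_*)$ does not follow.
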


\begin{proof}
First, up to a translation, we may assume that $F(p_0)=0$.
Let $\Ome_+$ be a smooth compact domain in $M$ such that $\boB_\rho\subset 
\Ome_+\subset \boB_{2\rho}$ and such that $0\notin F(\partial\Ome_+)$. We 
consider the Gulliver-Lawson conformal metric $\tilde g=r^{-2}g$. By 
Theorem~\ref{th:stable_estim} and 
Theorem~\ref{th:mububble}, there is $\Ome_*$ a domain in $M$ such that 
$\Ome_+\subset \Ome_*\subset \widetilde\boN_{100\pi}(\Ome_+) 
$ and $\partial\Ome_*$ satisfies the spectral Ricci lower bound 
\eqref{eq:specric2} for the metric induced by $\tilde g$.

By \cite[Theorem~1]{CaShZh}, $M$ has  one end. We consider $\Ome_{**}$ the 
connected component of $\Ome_*$ that contains $\boB_\rho$. We assume 
$M$ is simply 
connected so the unbounded component of $M\setminus \Ome_{**}$ 
has only boundary component $\Sigma_0$. Let $\Ome'$ be the bounded component of 
$M\setminus \Sigma_0$. We have $\boB_\rho\subset \Ome'$ and 
$\partial\Ome'\subset \widetilde\boN_{100\pi}(\boB_{2\rho})$.

On $\partial\boB_{2\rho}$, the Euclidean distance function $r$ is bounded by 
$2\rho$. So, by \cite[Lemma~6.2]{ChoLi}, on 
$\widetilde\boN_{100\pi}(\boB_{2\rho	})$, the Euclidean 
distance function $r$ is bounded by $2\rho\exp(100\pi)$.

Now, because of the spectral Ricci lower bound \eqref{eq:specric2} and since 
$\frac 
4{(4-a)\alpha}=\frac{43}{29}<\frac32= \frac{k-1}{k-2}$, we can apply the 
volume estimate of Antonelli and Xu \cite[Theorem~1]{AnXu} for the metric 
$\tilde g$ and obtain
\[
\vol_{\tilde g}(\Sigma_0)\le 
(\frac{\delta}{6\alpha})^{-2}\vol(\S^4)=(\frac{800}{43})^2\vol(\S^4)
\]
So scaling back to the Euclidean metric
\[
\vol(\Sigma_0)\le 
(\frac{800}{43})^2\vol(\S^4)\big(2\exp(100\pi)\big)^4\rho^4
\]

Finally we can apply the isoperimetric inequality for minimal hypersurfaces in 
$\R^{n+1}$ \cite{Bre2,MiSi} to obtain
\[
\vol_g(\boB_\rho)\le \vol_g(\Ome')\le 
\vol(\B^5)(\frac{800}{43})^{5/2}\big(2\exp(100\pi)\big)^5\rho^5
\]
\end{proof}

\begin{proof}[Proof of Theorem~\ref{th:main}]
Let $M\looparrowright \R^6$ be an immersed, connected,complete, two-sided, 
stable minimal 
hypersurface. The stability assumption lifts to the universal cover, so we can 
assume $M$ to be simply connected. By Proposition~\ref{prop:volestim}, $M$ has 
Euclidean volume growth. So by \cite{ScSiYa} (see also \cite{Bel}), we obtain 
that $M$ is a flat 
hyperplane.
\end{proof}

\end{document}